\title[A compact generator on formal schemes]{On the existence of a compact generator on the derived category of a noetherian formal scheme}
\author[L. Alonso]{Leovigildo Alonso Tarr\'{\i}o}
\address[L. A. T.]{Departamento de \'Alxebra\\
Facultade de Matem\'a\-ticas\\
Universidade de Santiago de Compostela\\
E-15782  Santiago de Compostela, Spain}
\email{leo.alonso@usc.es}
\author[A. Jerem\'{\i}as]{Ana Jerem\'{\i}as L\'opez}
\address[A. J. L.]{Departamento de \'Alxebra\\
Facultade de Matem\'a\-ticas\\
Universidade de Santiago de Compostela\\
E-15782  Santiago de Compostela, Spain}
\email{ana.jeremias@usc.es}
\author[M. P\'erez]{Marta P\'erez Rodr\'{\i}guez}
\address[M. P. R.]{Departamento de Matem\'a\-ticas\\
Esc. Sup. de Enx. Inform\'atica,
Campus de Ourense\\
Universidade de Vigo\\
E-32004 Ourense, Spain}
\email{martapr@uvigo.es}
\author[M. J. Vale]{Mar\'{\i}a J. Vale Gonsalves}
\address[M. J. V.]{Departamento de \'Alxebra\\
Facultade de Matem\'a\-ticas\\
Universidade de Santiago de Compostela\\
E-15782  Santiago de Compostela, Spain}
\email{mj.vale@usc.es}
\thanks{This work has been partially supported by 
Spain's MEC and E.U.'s FEDER research projects MTM2005-05754 
and MTM2008-03465.
}
\subjclass[2000]{14F99 (primary); 14F05, 18E30 (secondary)}
\date{\today}
\theoremstyle{plain}
\newtheorem{thm}{Theorem}[section]
\newtheorem{lem}[thm]{Lemma}
\newtheorem{cor}[thm]{Corollary}
\newtheorem{prop}[thm]{Proposition}
\theoremstyle{remark}
\newtheorem*{rem}{Remark}
\theoremstyle{definition}
\newtheorem{cosa}[thm]{}
\numberwithin{equation}{thm}
\newcommand{\CC}{{\mathcal C}}
\newcommand{\CE}{{\mathcal E}}
\newcommand{\CF}{{\mathcal F}}
\newcommand{\CG}{{\mathcal G}}
\newcommand{\CH}{{\mathcal H}}
\newcommand{\CI}{{\mathcal I}}
\newcommand{\CK}{{\mathcal K}}
\newcommand{\CL}{\mathcal{L}}
\newcommand{\CO}{\mathcal{O}}
\newcommand{\CP}{\mathcal{P}}
\newcommand{\CQ}{\mathcal{Q}}
\newcommand{\FS}{\mathfrak S}
\newcommand{\FU}{\mathfrak U}
\newcommand{\FV}{\mathfrak V}
\newcommand{\FX}{\mathfrak X}
\newcommand{\FY}{\mathfrak Y}
\newcommand{\FZ}{\mathfrak Z}
\newcommand{\SC}{\mathsf{C}}
\newcommand{\D}{\boldsymbol{\mathsf{D}}}
\newcommand{\R}{\boldsymbol{\mathsf{R}}}
\newcommand{\T}{\boldsymbol{\mathsf{T}}}
\newcommand{\A}{\mathsf{A}}
\newcommand{\cc}{\mathsf{c}}
\newcommand{\ts}{\mathsf{t}}
\newcommand{\md}{\text{-}\mathsf{Mod}}
\newcommand{\qc}{\mathsf{qc}}
\newcommand{\qct}{\mathsf{qct}}
\newcommand{\ZZ}{\mathbb{Z}}
\newcommand{\ia}{{\mathfrak a}}
\newcommand{\ip}{{\mathfrak p}}
\newcommand{\kk}{\kappa}
\newcommand{\dirlim}[1]{\begin{array}[t]{c} {\rm lim}\\[-7.5 pt]
 {\longrightarrow} \\[-7.5 pt] {\scriptstyle {#1}} \end{array}}
\newcommand{\lto}{\longrightarrow}
\newcommand{\xto}{\xrightarrow}
\newcommand{\inc}{\hookrightarrow}
\DeclareMathOperator{\iso}{\tilde{\to}}
\DeclareMathOperator{\liso}{\tilde{\lto}}
\newcommand{\dimp}{\Leftrightarrow}
\newcommand{\tr}{\triangle}
\DeclareMathOperator{\Hom}{Hom}
\DeclareMathOperator{\shom}{\CH\mathit{om}}
\DeclareMathOperator{\spec}{Spec}
\DeclareMathOperator{\spf}{Spf}
\DeclareMathOperator{\supp}{Supp}
\DeclareMathOperator{\id}{id}
\newcommand{\ie}{{\it i.e.\/} }
\newcommand{\cfr}{{\it cf.\/} }
\newcommand{\lc}{{\it loc.cit.\/} }
\begin{document}

\begin{abstract} In this paper, we prove that for a noetherian formal scheme $\FX$, its derived category of sheaves of modules with quasi-coherent torsion homologies $\D_\qct(\FX)$ is generated by a single compact object. In an appendix we prove that the category of compact objects in $\D_\qct(\FX)$ is skeletally small.
\end{abstract}

\maketitle

\section*{Introduction}

Formal schemes have played a basic role in algebraic geometry since its introduction by Grothendieck in \cite{GB}. They were intended to express in the language of schemes the theory of holomorphic functions of Zariski, and later they played an outstanding role in Grothendieck-Lefschetz theory. Its cohomology, however was poorly understood, in part due to the lack of an appropriate derived category of coefficients.

The situation has changed since the work \cite{dfs} where the basic cohomology theory including Grothendieck duality was developed. In \lc the category of quasi-coherent torsion coefficients was introduced, where torsion is related to the canonical adic topology in the structure sheaf on the formal scheme. The category makes it possible the development of a theory of cohomology for non-adic maps of formal schemes. Further results were obtained in \cite{LNS} and \cite{fgm}.

In these developments the basic ingredient is the derived category of sheaves of modules with quasi-coherent torsion homologies $\D_\qct(\FX)$ where $(\FX, \CO_\FX)$ is a noetherian formal scheme. Recent results have been the determination of its Bousfield localizations in connection with the underlying geometric structure \cite{AJSB}. Also it has been shown in \cite{ahst} that whenever $\FX$ is semi-separated $\D_\qct(\FX)$ is a stable homotopy category in the sense of Hovey, Palmieri and Strickland \cite{hps}.

One of the main features of this structure is the existence of good generators. It has been known for some time that derived categories of smooth projective varieties have a single compact generator. Compact objects correspond to the classical notion of perfect complex studied by Grothendieck's school \cite{I1}. Surprisingly, Bondal and Van den Bergh proved in \cite{bb} that the derived category of sheaves of modules with quasi-coherent homologies $\D_\qc(X)$ ---where $(X, \CO_X)$ is a quasi-compact and quasi-separated scheme--- is generated by a single perfect complex. Later, Lipman and Neeman obtained a different proof \cite{LN} with additional precisions on the generator under the additional hypothesis that $X$ is separated.

In our previous paper \cite[Proposition 6.14]{ahst} we showed that $\D_\qct(\FX)$ is compactly generated for a noetherian formal scheme $\FX$. The proof there gives no control on how many compact objects are needed to generate the category. In this paper we show that $\D_\qct(\FX)$ possesses a single compact generator (Theorem \ref{one}). The feasibility of getting this result was remarked to the authors by Van den Bergh. In the case in which $\FX$ is of pseudo-finite type over a field, say, this opens the possibility of studying $\FX$ through the differential graded algebra of endomorphisms of its generator, but this is a question for later study.

Our results depend on the machinery developed in \cite{dfs} and \cite{ahst}. In the first section we collect a few general notions and establish some notations for convenience of the readers. In the second section we give the proof of the theorem. Proposition \ref{propb} is  an intermediate result on the relationship between the derived categories of sheaves of modules with quasi-coherent torsion homologies on a formal scheme, a closed formal subscheme and its open complement. It is used in a crucial way in the proof of our main result.

In the last section, we give a result that is used in the proof of Theorem \ref{one} but that has also an independent interest, namely, for a noetherian formal scheme $(\FX, \CO_\FX)$ the category of perfect complexes in $\D_\qct(\FX)$ is skeletally small, \ie the isomorphy classes of perfect complexes possess a representative in a certain set. This allows for constructions of invariants based on this set of classes for formal schemes, like Picard groups, $K$-theory, etc.
\section{Preliminaires}

We assume that the reader is familiar with the basic theory of formal schemes  as is explained in \cite[\S 10]{ega1}.
We will recall briefly some definitions and results about locally noetherian formal schemes. 

\begin{cosa}  \label{equiv}
The functors
\begin{equation*} 
A \leadsto \FX := \spf(A) \qquad \mathrm{and} \qquad
\FX \leadsto A := \Gamma(\FX, \CO_{\FX})
\end{equation*}
define a duality between the category of adic noetherian rings and the category of affine noetherian formal schemes that generalizes the duality between the categories of rings and affine schemes (see \cite[(10.2.2), (10.4.6)]{ega1}). Let $I$ be an ideal of definition of $A$. The formal scheme $\spf(A)$ is the completion of the scheme $\spec(A)$ along the closed subscheme $V(I)$. They are related through the completion map that we usually denote
\[
\kk \colon \spf(A) \lto \spec(A).
\]
\end{cosa}


\begin{cosa}  \label{defadic}
A morphism $f\colon\FX \to \FY$ of locally noetherian formal schemes is \emph{adic}  if there exists an Ideal of  definition $\CK$ of  $\FY$ such that $f^{*}(\CK)\CO_{\FX}$ is an Ideal of  definition of  $\FX$. Note that if there exists an Ideal of definition $\CK$ of $\FY$ such that $f^*(\CK)\CO_{\FX}$ is an Ideal of definition of $\FX$, then the same holds true for all Ideals of definition of $\FY$. See \cite[\S 10.12.]{ega1}.
\end{cosa}

\begin{cosa} \label{defnenc} 
Let $\FX$ be a locally noetherian formal scheme. Given $\CI \subset \CO_{\FX}$, a coherent Ideal, $\FX' := \supp(\CO_{\FX}/\CI)$ is a closed subset and  $(\FX', (\CO_{\FX}/\CI)|_{\FX'})$ is a noetherian formal scheme. We will say that $\FX'$ is the   \emph{closed (formal) subscheme} of  $\FX$ defined by $\CI$. See \cite[\S 10.14.]{ega1}. On the other hand, if  $\FU \subset \FX$ is an open subset, it holds that $(\FU,\CO_{\FX}|_{\FU})$ is a noetherian formal scheme  and we say that $\FU$ is an \emph{open subscheme  of  $\FX$} (see \cite[(10.4.4)]{ega1}).

A morphism $f:\FZ \to \FX$  is a \emph{closed immersion} (\emph{open immersion}) if there exists $\FZ' \subset \FX$ closed (open, respectively) formal subscheme such that  $f$ factors as
\[
\FZ \xto{g} \FZ' \inc \FX
\]
where $g$ is an isomorphism. Closed and open inmersions are adic morphisms.
\end{cosa}

\begin{cosa}
Let $(\FX, \CO_\FX)$ be a noetherian formal scheme with an ideal of definition  $\CI$. Let $\A(\FX)$ be the category of all $\CO_\FX$-modules and $\varGamma'_\FX \colon \A(\FX) \to \A(\FX)$ the functor defined by
\[ 
\varGamma'_\FX \CF := \dirlim{n > 0} \shom_{\CO_\FX}(\CO_\FX/\CI^n, \CF)
\] 
for $\CF \in \A(\FX)$. It does not depend on $\CI$, only on the topology defined by $\CI$ in the sheaf of rings $\CO_{\FX}$.  Let $\A_\ts(\FX)$ be the full subcategory of $\A(\FX)$ whose objects are those $\CF$ such that
$\varGamma'_\FX \CF = \CF$; it is a subcategory of $\A(\FX)$ closed for kernels, cokernels and extensions (\ie \emph{plump}, as defined in \cite[beginning of \S 1]{dfs}). Let $\A_\qc(\FX)$ be the subcategory of  $\A(\FX)$ formed by quasi-coherent sheaves (it makes sense for any ringed space, \cfr \cite[\textbf{0}, \S 5]{ega1}). We will denote by $\A_\qct(\FX) : = \A_\ts(\FX) \cap \A_\qc(\FX)$. It is again a plump
subcategory of $\A(\FX)$, see \cite[Corollary 5.1.3]{dfs} where it was introduced. The category $\A_\qct(\FX)$ is a Grothendieck category, \ie it has a generator and exact filtered direct limits \cite[Proposition 1.8]{toh} (see \cite[Lemma 6.1]{ahst}).

From the fact that $\A_\qct(\FX)$ is plump in $\A(\FX)$ it follows that the full subcategory of the derived category $\D(\FX) := \D(\A(\FX))$ formed by complexes whose homology lies in $\A_\qct(\FX)$ is a triangulated subcategory of  $\D(\FX)$. We will denote it by $\D_\qct(\FX)$. This category will be the main object of study of the present work.
\end{cosa}

\begin{cosa}
Let $\FX$ be a noetherian formal scheme. A complex $\CE^\bullet \in \D(\FX)$ is called \emph{perfect} if for every $x \in \FX$ there is an open neighborhood $\FU$ of $x$ and a bounded complex of locally-free finite type $\CO_\FU$-Modules $\CF^\bullet$ together with an isomorphism $\CF^\bullet \iso \CE^\bullet|_\FU$ in $\D(\FU)$ (\cfr \cite[D\'efinition 4.7.]{I1}).

Let $\T$ be a triangulated category with coproducts. An object $E$ of $\T$ is called \emph{compact} if the functor $\Hom_{\T}(E,-)$ commutes with arbitrary coproducts. By \cite[Proposition 6.12.]{ahst} the compact objects in $\D_\qct(\FX)$ are the perfect complexes.
\end{cosa}

\begin{cosa} \label{genadd}
Let $\T$ be as before and $S = \{E_{\alpha} \, / \, \alpha \in L\}$ a set of objects of $\T$. We say that $S$ is a \emph{set of generators} of $\T$ if, for an object $M \in \T$, $\Hom_{\T}(E_{\alpha}[i], M) = 0$, for all $\alpha \in L$ and for all $i \in \ZZ$, implies $M = 0$. The existence of a single generator is equivalent to the existence of a set of generators taking as single generator the coproduct of all the objects in the family. If the set $S$ is formed by compact objects then $S$ is a set of generators of $\T$ if and only if the smallest triangulated subcategory closed for coproducts that contains $S$ is all of $\T$, see \cite[Lemma 3.2]{Ngd}.
\end{cosa}

\section{The main theorem}

We start with some results that will be used throughout the proof of Theorem \ref{one}.

\begin{prop} \label{afin}
Let $\FX = \spf(A)$ be an affine formal scheme such that $A$ is an $I$-adic noetherian ring. The category $\D_\qct(\FX)$ is generated by a perfect complex.
\end{prop}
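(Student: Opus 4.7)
My plan is to take as generator a Koszul complex on generators of an ideal of definition. Let $I\subset A$ be an ideal of definition, pick generators $a_1,\dots,a_r$ of $I$, and form
\[
K^\bullet\;:=\;\bigotimes_{i=1}^{r}\bigl(A\xto{a_i}A\bigr),
\]
a bounded complex of finite free $A$-modules placed in degrees $-r,\dots,0$. Via the duality of \ref{equiv}, $K^\bullet$ corresponds to a bounded complex of locally free $\CO_\FX$-Modules of finite type, so $K^\bullet\in\D(\FX)$ is perfect. The standard fact that the cohomology of a Koszul complex is annihilated by the generating sequence shows $K^\bullet\in\D_\qct(\FX)$, and by \cite[Proposition 6.12]{ahst} $K^\bullet$ is then compact in $\D_\qct(\FX)$.

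By \ref{genadd}, to conclude it suffices to see that if $\CF\in\D_\qct(\FX)$ satisfies $\Hom_{\D(\FX)}(K^\bullet[n],\CF)=0$ for every $n\in\ZZ$, then $\CF=0$. I would translate this into an algebraic statement through the affine equivalence $\D_\qct(\FX)\simeq\D_{I\text{-tors}}(A)$ available from \cite{dfs} and \cite{ahst}, where $\D_{I\text{-tors}}(A)$ denotes the full subcategory of $\D(A)$ of complexes with $I$-torsion cohomology; under it $K^\bullet_\FX$ corresponds to the algebraic Koszul complex. Using Koszul self-duality $K^{\bullet\vee}\cong K^\bullet[-r]$ the hypothesis becomes $K^\bullet\otimes^L_A N=0$ in $\D_{I\text{-tors}}(A)$, where $N$ is the complex corresponding to $\CF$.

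For the final step I would argue by downward induction on $j$. Set
\[
M_j\;:=\;K(a_1)\otimes^L_A\cdots\otimes^L_A K(a_j)\otimes^L_A N\qquad(0\le j\le r),
\]
so $M_0=N$, $M_r=K^\bullet\otimes^L_A N=0$, and $M_j=K(a_j)\otimes^L_A M_{j-1}$ sits in a distinguished triangle
\[
M_{j-1}\xto{\,a_j\,}M_{j-1}\lto M_j\lto M_{j-1}[1]
\]
arising from the triangle $A\xto{a_j}A\to K(a_j)\to A[1]$. Each $M_j$ lies in $\D_{I\text{-tors}}(A)$, since the derived tensor product of an $I$-torsion $A$-module with any $A$-module is again $I$-torsion (reduce to modules killed by a power of $I$, hence to modules over the noetherian ring $A/I^n$). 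If $M_j=0$, the triangle forces $a_j$ to act as an isomorphism on each $H^i(M_{j-1})$; but on the $I$-torsion module $H^i(M_{j-1})$ the element $a_j\in I$ is locally nilpotent, and an endomorphism that is simultaneously invertible and locally nilpotent must vanish, forcing $M_{j-1}=0$. Iterating from $j=r$ down to $j=1$ gives $N=M_0=0$, as required.

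The main obstacle I anticipate is invoking the affine equivalence with $\D_{I\text{-tors}}(A)$ in the precise form required; should that not be cleanly available, the same cone-by-cone argument can be carried out directly on $\FX$, computing $\Hom_{\D(\FX)}(K^\bullet_\FX[n],\CF)$ via $\R\Gamma(\FX,-)$ (exact on $\A_\qct(\FX)$ for affine $\FX$, since torsion quasi-coherent sheaves there are pushed forward from ordinary affine schemes $\spec(A/I^n)$) and running the identical invertibility-versus-nilpotence argument on the cohomology sheaves of $\CF$.
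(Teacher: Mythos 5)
Your proposal is correct and follows essentially the same route as the paper: the generator is the sheafified Koszul complex on a system of generators of $I$, handled through the affine equivalence $\D_\qct(\FX)\simeq\D_{I}(A)$ coming from \cite[Proposition 5.2.4]{dfs}. The only difference is that the paper disposes of the generation statement by citing \cite[Proposition 6.10]{ahst} (which in turn rests on \cite[Proposition 6.1]{BN} or \cite[Proposition 6.1]{dg}), whereas you reprove it directly via Koszul self-duality and the invertible-versus-locally-nilpotent argument on $I$-torsion cohomology --- which is precisely the standard proof of the cited result, so nothing is missing.
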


\begin{proof}
Take the complex of sheaves associated to a Koszul complex built on a system of generators of the ideal $I$, see \cite[Proposition 6.10]{ahst}. 
\end{proof}

\begin{rem}
This is the affine case of our main Theorem \ref{one}. It is also the first step in its proof.
\end{rem}

Let $\D_{I}(A)$ be the full subcategory of the derived category of $A$-modules formed by complexes whose homologies are $I$-torsion. In the proof of the previous Proposition, the equivalence of categories between the category $\D_\qct(\FX)$ and $\D_{I}(A)$, that follows from \cite[Proposition 5.2.4]{dfs}, is used in a decisive way.

Let $V(I) = \{\ip \in \spec(A) \,/\, I \subset \ip\}$ \ie the set of zeros of the ideal $I$ in $X := \spec(A)$ and $\D_{\qc,V(I)}(X)$ be the derived category of quasi-coherent sheaves in $X$ supported on $V(I)$. Let $\kk \colon \FX \to X$ be the canonical completion map. Consider the following composition of equivalences of derived categories
\[
\D_\qct(\FX) \xto{\,\,\,\kk_*\,\,\,} 
\D_{\qc,V(I)}(X) \xto{\R\Gamma(X, -)} 
\D_{I}(A).
\]
Let us denote this equivalence by $\phi \colon \D_\qct(\FX) \lto \D_{I}(A)$. Note that a quasi-inverse functor is the composition
\[
\D_{I}(A) \xto{\,\,\,(-)^{\sim}\,\,\,} 
\D_{\qc,V(I)}(X) \xto{\,\,\,\kk^*\,\,\,} 
\D_\qct(\FX).
\]
This composition will be denoted by $\psi \colon \D_{I}(A) \lto \D_\qct(\FX)$. 

Let $\FZ$ be a closed formal subscheme of a noetherian formal scheme $\FX$. We will denote by $\D_{\qct, \FZ}(\FX)$ the full subcategory of $\D_\qct(\FX)$ formed by the complexes $\CF^\bullet$ such that $\CH^p(\CF^\bullet)|_{\FX \setminus \FZ} = 0$ for all $p \in \ZZ$.

\begin{lem}\label{lemaa}
Let $\ia$ be an ideal of the $I$-adic ring $A$. Let $\FX = \spf(A)$ and $\FZ = \spf(A/\ia)$ the corresponding closed subscheme. The previous equivalence restricts to the subcategories $\D_{\qct, \FZ}(\FX)$ and $\D_{I+\ia}(A)$.
\end{lem}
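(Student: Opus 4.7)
The plan is to verify the claim at the level of the hearts of the standard $t$-structures and then reduce it to a support computation on $\spec(A)$. First I would check that $\phi$ is $t$-exact: $\kk_*$ restricts to an equivalence of abelian categories between $\A_\qct(\FX)$ and the quasi-coherent sheaves on $X = \spec(A)$ supported on $V(I)$ (see \cite[Proposition 5.2.4]{dfs}), hence is exact on $\A_\qct(\FX)$; and $\R\Gamma(X,-) = \Gamma(X,-)$ is exact on quasi-coherent sheaves since $X$ is affine. Cohomology therefore commutes with $\phi$, so both inclusions $\phi(\D_{\qct,\FZ}(\FX)) \subset \D_{I+\ia}(A)$ and $\psi(\D_{I+\ia}(A)) \subset \D_{\qct,\FZ}(\FX)$ reduce to the abelian assertion: for $\CH \in \A_\qct(\FX)$ with associated $I$-torsion $A$-module $M = \Gamma(\FX,\CH)$, the sheaf $\CH$ vanishes on $\FX \setminus \FZ$ if and only if $M$ is $(I+\ia)$-torsion.

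Next I would identify $\FX \setminus \FZ$ inside $X$. The completion map $\kk$ is a homeomorphism of $\FX$ onto $V(I) \subset X$. Endowing $A/\ia$ with the quotient of the $I$-adic topology yields the $(I+\ia)/\ia$-adic topology, so the points of $\FZ = \spf(A/\ia)$ are the primes of $A/\ia$ containing $(I+\ia)/\ia$, i.e., the primes of $A$ that contain $I + \ia$. Thus $\FZ$ corresponds to $V(I+\ia)$ and $\FX \setminus \FZ$ to $V(I) \setminus V(I+\ia)$ under $\kk$.

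The remaining step is a routine support argument. The condition $\CH|_{\FX \setminus \FZ} = 0$ translates, via $\kk_*$, to $M_\ip = 0$ for every $\ip \in V(I) \setminus V(I+\ia)$. Since $M$ is already $I$-torsion, its stalks at primes outside $V(I)$ vanish automatically, so the condition is equivalent to $\supp M \subset V(I+\ia)$. By noetherianness of $A$, any $m \in M$ generates a finitely generated submodule whose annihilator has radical containing $I+\ia$, so $\supp M \subset V(I+\ia)$ is equivalent to $M$ being $(I+\ia)$-torsion; the reverse implication is immediate. I do not anticipate a significant obstacle beyond making the topological identification $\FZ \leftrightarrow V(I+\ia)$ precise and invoking noetherianness to pass between support and torsion.
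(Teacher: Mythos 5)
Your proposal is correct and follows essentially the same route as the paper: both arguments reduce the derived-category statement to a stalkwise support computation for the homology sheaves through the completion map $\kk$ (using that $\kk$ is a homeomorphism onto $V(I)$, that $\FZ$ corresponds to $V(I+\ia)$, and the standard equivalence between ``supported on $V(I+\ia)$'' and ``$(I+\ia)$-torsion'' over the noetherian ring $A$). The only cosmetic difference is that you package the reduction via $t$-exactness of $\phi$ and prove a single module-level biconditional, while the paper checks directly that $\kk_*$ and $\kk^*$ each preserve the support conditions; the content is the same.
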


\begin{proof}
Let $Z := V(I+\ia) \subset X$ where $X = \spec(A)$. It is clear from the previous discussion that the categories $\D_{I+\ia}(A)$ and $\D_{\qc,Z}(X)$ are equivalent through the quasi-inverse functors $(-)^{\sim}$ and $\R\Gamma(X, -)$. 

Let us deal with the general case $\FX = \spf(A)$ and check that $\kk_*$ takes $\D_{\qct, \FZ}(\FX)$ into $\D_{\qc,V(I+\ia)}(X)$. Indeed, we will see that if $\CF$ is a $(I+\ia)^\tr$-torsion quasi-coherent $A^\tr$-module\footnote{We recall that for a finite-type $A$-module $M$ we denote by $M^\tr$ its associated coherent sheaf on $\spf(A)$, \cite[(10.10.1)]{ega1}.} then $\kk_*(\CF)$ is a quasi-coherent $\widetilde{A}$-module supported on $Z$. If $\CF|_{\FX \setminus \FZ} = 0$ then $\CF_x = 0$ for all $x \in {\FX \setminus \FZ}$. But then it follows that $\kk_*\CF_y = 0$ for all $y \in X \setminus Z$. 

It remains to check that if $\CG$ is a quasi-coherent $\CO_X$-module supported on $Z$ then $\supp(\kk^*\CG) \subset \FZ$. Notice that for any $y \in \FX$,
\[
(\kk^*\CG)_y = \CG_{\kk(y)} \otimes_{\CO_{X,\kk(y)}} \CO_{\FX,y}.
\] 
Therefore if $\CG_x = 0$ for all $x \in X \setminus V(I+\ia)$, then $(\kk^*\CG)_y = 0$ for all $y \in \FX \setminus\FZ = \kk^{-1}(X \setminus Z)$.
\end{proof}

\begin{cosa}
Let $\T$ be a triangulated category and let $\CL$ be a full triangulated subcategory of $\T$. It is possible to construct after Verdier a new triangulated category denoted $\T/\CL$ by inverting the morphisms $A \to B$ in $\T$ such that if $A \to B \to C \to A[1]$ is a distinguished triangle then $C \in \CL$. This process can be performed through a calculus of fractions, see \cite[\S 2.1]{Ntc}. There is a canonical functor $q \colon \T \to \T/\CL$ that carries every object to itself. Note that the objects in $\CL$ go to zero by $q$ and that $q$ is universal for triangulated functors from $\T$ with this property.
\end{cosa}

\begin{prop}\label{propb}
Let $\FX$ be a noetherian formal scheme and $\FU$ an open formal subscheme. Let $\FZ := \FX \setminus \FU$. We have an equivalence of categories:
\[
\frac{\D_\qct(\FX)}{\D_{\qct, \FZ}(\FX)} \cong \D_\qct(\FU).
\]
\end{prop}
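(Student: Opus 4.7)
The plan is to exhibit the equivalence as the functor induced by restriction along the open immersion $j\colon \FU \hookrightarrow \FX$, and to identify this with a Bousfield localization whose kernel is $\D_{\qct,\FZ}(\FX)$.

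First I would construct the functor. Since $j$ is flat (open immersions are adic), the pullback $j^*=(-)|_\FU$ is exact and preserves quasi-coherence and torsion, being local properties. It therefore restricts to a triangulated functor $j^*\colon \D_\qct(\FX)\to\D_\qct(\FU)$, and by the very definition of $\D_{\qct,\FZ}(\FX)$ one has $j^*\CF^\bullet=0$ if and only if $\CF^\bullet\in\D_{\qct,\FZ}(\FX)$. The universal property of the Verdier quotient then produces a canonical triangulated functor
$$\bar{j}^*\colon \D_\qct(\FX)/\D_{\qct,\FZ}(\FX)\lto\D_\qct(\FU),$$
and the whole task is to prove this $\bar{j}^*$ is an equivalence.

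Next I would construct a right adjoint to $j^*$ at the level of $\D_\qct$. Since $\D_\qct(\FX)$ is compactly generated by \cite[Proposition 6.14]{ahst}, Brown representability gives a right adjoint $Q_\FX\colon\D(\FX)\to\D_\qct(\FX)$ to the inclusion. Composing with derived pushforward, I set $j^\sharp:=Q_\FX\circ\R j_*\colon\D_\qct(\FU)\to\D_\qct(\FX)$, which is then right adjoint to $j^*$ on $\D_\qct$. The core technical step is to verify that the counit $\varepsilon\colon j^* j^\sharp\to\id$ is an isomorphism on $\D_\qct(\FU)$. To check this I would work locally: on any affine open $\FV\subset\FU$, use the equivalence $\phi\colon\D_\qct(\FV)\iso\D_I(A)$ from Proposition \ref{afin} and the fact that $\R j_*$ is computed by sections over such opens, to reduce the assertion to the statement that for $\CG^\bullet$ already in $\D_\qct(\FU)$ the qct-reflection $Q_\FX$ does not change $\R j_*\CG^\bullet$ when restricted back to $\FU$. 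Equivalently, the cone of $\R j_*\CG^\bullet\to Q_\FX\R j_*\CG^\bullet$ restricts to zero over $\FU$, because it lies in the right-orthogonal to $\D_\qct(\FX)$ while its restriction is computed by pieces that are already qct.

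Once $\varepsilon$ is known to be an isomorphism, the rest is formal. Essential surjectivity of $\bar{j}^*$ is immediate from $j^* j^\sharp\CG^\bullet\iso\CG^\bullet$. Full faithfulness follows from the standard computation: for $\CF^\bullet,\CG^\bullet\in\D_\qct(\FX)$, adjunction gives
$$\Hom_{\D_\qct(\FU)}(j^*\CF^\bullet,j^*\CG^\bullet)\cong\Hom_{\D_\qct(\FX)}(\CF^\bullet,j^\sharp j^*\CG^\bullet),$$
and the unit $\CG^\bullet\to j^\sharp j^*\CG^\bullet$ has cone in $\D_{\qct,\FZ}(\FX)$ (its $j^*$ is zero by the counit iso), so the right-hand side coincides with $\Hom$ in the Verdier quotient $\D_\qct(\FX)/\D_{\qct,\FZ}(\FX)$.

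The main obstacle is precisely the check that $\varepsilon\colon j^* Q_\FX\R j_*\to\id$ is an isomorphism, i.e.\ that the qct-reflection $Q_\FX$ commutes with restriction along $j$ up to quasi-isomorphism on $\FU$. In the separated or semi-separated case one can do this by a direct sheaf-theoretic calculation using an affine cover of $\FX$, reducing via Lemma \ref{lemaa} to the purely algebraic statement that for an ideal $\ia$ of an $I$-adic noetherian ring $A$, the localization $\D_I(A)/\D_{I+\ia}(A)$ matches $\D_\qct(\spf(A)\setminus\spf(A/\ia))$ on affine pieces. In the general noetherian case the same check is propagated by Mayer--Vietoris along a finite cover, at which point the proposition is established.
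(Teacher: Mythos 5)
Your overall skeleton (restriction induces $\bar{j}^{*}$ on the Verdier quotient; a right adjoint to $j^{*}$ with invertible counit then yields the equivalence, as in Corollary \ref{bous}) is sound, but the step you yourself identify as the core one is not actually proved, and the justification you sketch for it is invalid. Writing $N$ for the cone of the counit $Q_{\FX}\R j_{*}\CG^{\bullet}\to\R j_{*}\CG^{\bullet}$, what you need is precisely $j^{*}N=0$; all you know is that $N$ lies in the right orthogonal of $\D_\qct(\FX)$ inside $\D(\FX)$, and membership there gives no control whatsoever on the restriction $j^{*}N$ (objects orthogonal to $\D_\qct(\FX)$ need not restrict to zero, nor to objects orthogonal to $\D_\qct(\FU)$). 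The proposed ``local'' verification does not close this gap either: reducing to the statement that $\D_{I}(A)/\D_{I+\ia}(A)$ ``matches'' $\D_\qct$ of the open complement on affine pieces is essentially the affine case of the proposition you are trying to prove, so the reduction is circular, and the final ``propagated by Mayer--Vietoris'' is only asserted, not argued.

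The missing ingredient, and the reason the paper's proof is short, is that for an open immersion $\alpha\colon\FU\to\FX$ of noetherian formal schemes one already knows $\R\alpha_{*}\bigl(\D_\qct(\FU)\bigr)\subset\D_\qct(\FX)$ (\cite[Proposition 5.2.6]{dfs}); with this, your reflection $Q_{\FX}$ is superfluous (it acts as the identity on $\R\alpha_{*}\CG^{\bullet}$) and your counit statement becomes the trivial $\alpha^{*}\R\alpha_{*}\cong\id$. The paper then takes $q\circ\R\alpha_{*}$ as quasi-inverse to $\overline{\alpha}$ and disposes of the remaining composite using the localization triangle
\[
\R\varGamma_{\FZ}\CF^{\bullet}\lto\CF^{\bullet}\lto\R\alpha_{*}\alpha^{*}\CF^{\bullet}\overset{+}{\lto}
\]
of \cite[2.1.(1)]{AJSB}, noting $\R\varGamma_{\FZ}\CF^{\bullet}\in\D_{\qct,\FZ}(\FX)$ is killed by $q$. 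So your route can be repaired, but only by importing exactly these two facts (stability of $\D_\qct$ under $\R\alpha_{*}$, and the $\R\varGamma_{\FZ}$ triangle), at which point your Brown-representability detour through $Q_{\FX}$ adds nothing; as it stands, the argument has a genuine hole at its central claim.
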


\begin{proof}
Let $\alpha \colon \FU \to \FX$ be the canonical immersion. Since $\alpha^*(\D_{\qct, \FZ}(\FX)) = 0$, there is a functor $\overline{\alpha}$ such that the diagram of functors
\[
 \begin{tikzpicture}
       \draw[white] (0cm,2cm) -- +(0: \linewidth)
      node (G) [black, pos = 0.3] {$\D_\qct(\FX)$}
      node (H) [black, pos = 0.7, scale=1.2] {$\frac{\D_\qct(\FX)}{\D_{\qct, \FZ}(\FX)}$};
      \draw[white] (0cm,0.5cm) -- +(0: \linewidth)
      node (E) [black, pos = 0.5] {$\D_\qct(\FU)$};
      \draw [->] (G) -- (H) node[above, midway, sloped, scale=0.75]{$q$};
      \draw [->] (G) -- (E) node[auto, swap, midway, scale=0.75]{$\alpha^*$};
      \draw [->] (H) -- (E) node[auto, midway, scale=0.75]{$\overline{\alpha}$};
  \end{tikzpicture}
\]
commutes. On the other hand it holds that $\R\alpha_*(\D_\qct(\FU)) \subset \D_\qct(\FX)$ by \cite[Proposition 5.2.6]{dfs}. Consider the composition
\[
\D_\qct(\FU) \xto{\R\alpha_*} \D_\qct(\FX) \overset{q}{\lto}
\frac{\D_\qct(\FX)}{\D_{\qct, \FZ}(\FX)}.
\]
We will see that $\overline{\alpha}$ and $q \circ \R\alpha_*$ are quasi-inverse functors. Note first that
\[
\overline{\alpha} \circ q \circ \R\alpha_* =
\alpha^* \circ \R\alpha_* =
\id_{\D_\qct(\FU)}.
\]
To finish the proof, we have to check that $q \circ \R\alpha_* \circ \overline{\alpha}$ is isomorphic to the identity functor on $\frac{\D_\qct(\FX)}{\D_{\qct, \FZ}(\FX)}$. Let $\CF^\bullet \in \D_\qct(\FX)$ and consider the localization triangle \cite[2.1.(1)]{AJSB}
\[
\R\varGamma_{\FZ}\CF^\bullet \lto 
\CF^\bullet \lto 
\R\alpha_*\alpha^*\CF^\bullet \overset{+}{\lto}.
\]
We have that $\R\varGamma_{\FZ}\CF^\bullet \in \D_{\qct, \FZ}(\FX)$ therefore $q(\R\varGamma_{\FZ}\CF^\bullet) = 0$ and we obtain an isomorphism $q\CF^\bullet \iso q\R\alpha_*\alpha^*\CF^\bullet$. But it follows that $q\CF^\bullet \iso q\R\alpha_*\overline{\alpha} (q\CF^\bullet)$, and as every object of ${\D_\qct(\FX)}/{\D_{\qct, \FZ}(\FX)}$ is of the form $q\CF^\bullet$ for some $\CF^\bullet \in \D_\qct(\FX)$, we have proved our claim and the proposition.
\end{proof}

\begin{cosa}
Let $\T$ be a triangulated category with coproducts and let $\CL$ be a full triangulated subcategory of $\T$. We will denote by 
\[
\CL^{\perp} = \{ Z \in \T \,/\, \Hom_{\T}(Y,Z) = 0 \text{ for every } Y \in \CL \}.
\] 
The full subcategory $\CL^{\perp}$ is a triangulated subcategory of $\T$. If $\{E_{\alpha} \, / \, \alpha \in L\}$
is a set of objects of $\T$, we will denote by $\langle E_{\alpha} \, / \, \alpha \in L \rangle$ the smallest full triangulated subcategory of $\T$ stable for coproducts containing $\{E_{\alpha} \, / \, \alpha \in L\}$. Observe that $\{E_{\alpha} \, / \, \alpha \in L\}^{\perp} = \langle E_{\alpha} \, / \, \alpha \in L \rangle^{\perp}$. A full triangulated subcategory of $\T$ stable for coproducts is called a localizing subcategory of $\T$. 
If $\CL$ is a localizing subcategory of $\T$, the objects of $\CL^{\perp}$ are called $\CL$-local objects.
\end{cosa}

\begin{cor}\label{bous}
In the setting of the previous proposition, the functor $q$ induces an equivalence of categories
\[
\widetilde{q} \colon \D_{\qct, \FZ}(\FX)^\perp \lto 
\frac{\D_\qct(\FX)}{\D_{\qct, \FZ}(\FX)}
\]
whose quasi-inverse is induced by $\R\alpha_* \circ \overline{\alpha}$.
\end{cor}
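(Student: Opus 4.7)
The plan is to construct a candidate quasi-inverse from $\R\alpha_* \circ \overline{\alpha}$ and verify the two compositions, using the localization triangle and the adjunction $(\alpha^*,\R\alpha_*)$.

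First I would show that $\R\alpha_* \circ \overline{\alpha}$ takes values in the orthogonal $\D_{\qct,\FZ}(\FX)^\perp$, essentially because $\alpha^*$ annihilates $\D_{\qct,\FZ}(\FX)$: for $\CY \in \D_{\qct,\FZ}(\FX)$ and $\CH^\bullet \in \D_\qct(\FU)$, the adjunction gives
\[
\Hom_{\D_\qct(\FX)}(\CY, \R\alpha_*\CH^\bullet) \cong \Hom_{\D_\qct(\FU)}(\alpha^*\CY, \CH^\bullet) = 0.
\]
This provides a functor $\Psi \colon \D_\qct(\FX)/\D_{\qct,\FZ}(\FX) \to \D_{\qct,\FZ}(\FX)^\perp$, and I would define $\widetilde{q}$ simply as the restriction of $q$ to the orthogonal subcategory.

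Next, the composition $\widetilde{q}\circ\Psi \cong \id$ follows immediately from $q\circ\R\alpha_*\circ\overline{\alpha} \cong \id$, already established in the proof of Proposition \ref{propb}. The harder direction is $\Psi\circ\widetilde{q} \cong \id_{\D_{\qct,\FZ}(\FX)^\perp}$. For $\CG^\bullet \in \D_{\qct,\FZ}(\FX)^\perp$, I would use the localization triangle
\[
\R\varGamma_\FZ\CG^\bullet \lto \CG^\bullet \lto \R\alpha_*\alpha^*\CG^\bullet \overset{+}{\lto}
\]
and reduce to showing $\R\varGamma_\FZ\CG^\bullet = 0$. Applying $\Hom(\CY,-)$ for arbitrary $\CY \in \D_{\qct,\FZ}(\FX)$, together with its shifts, the middle term vanishes since $\CG^\bullet$ is in the orthogonal, and the right-hand term $\Hom(\CY,\R\alpha_*\alpha^*\CG^\bullet[n]) \cong \Hom(\alpha^*\CY,\alpha^*\CG^\bullet[n])$ vanishes by the adjunction argument above; the long exact sequence then forces $\Hom(\CY,\R\varGamma_\FZ\CG^\bullet[n]) = 0$ for all $n$. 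Taking $\CY = \R\varGamma_\FZ\CG^\bullet$ itself, which again lies in $\D_{\qct,\FZ}(\FX)$, shows that the identity of $\R\varGamma_\FZ\CG^\bullet$ is zero, hence $\R\varGamma_\FZ\CG^\bullet = 0$.

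The only step I expect to need a small external input is the fact that $\R\varGamma_\FZ\CG^\bullet$ lies again in $\D_{\qct,\FZ}(\FX)$; this is a standard property of the torsion functor along $\FZ$ that is already implicit in the cited localization triangle \cite[2.1.(1)]{AJSB}. Once that is granted, the corollary becomes a formal consequence of the adjunction $(\alpha^*,\R\alpha_*)$ and the triangulated structure.
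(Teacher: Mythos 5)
Your argument is correct, but it takes a more hands-on route than the paper, whose entire proof is a citation: it combines Proposition \ref{propb} with the abstract Bousfield-localization statement \cite[Proposition 1.6, (ii) $\Leftrightarrow$ (iii)]{AJS}, which says precisely that when the localizing subcategory $\D_{\qct,\FZ}(\FX)$ admits the associated (co)localization data, the composite $\D_{\qct,\FZ}(\FX)^\perp \hookrightarrow \D_\qct(\FX) \xrightarrow{q} \D_\qct(\FX)/\D_{\qct,\FZ}(\FX)$ is an equivalence. What you do instead is re-prove that abstract fact in the concrete setting: you check via the adjunction $(\alpha^*,\R\alpha_*)$ that $\R\alpha_*\circ\overline{\alpha}$ lands in the orthogonal, reuse $q\circ\R\alpha_*\circ\overline{\alpha}\cong\id$ from Proposition \ref{propb}, and for the other composite reduce to $\R\varGamma_\FZ\CG^\bullet=0$ for $\CG^\bullet$ in the orthogonal, killing it by testing against itself. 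The one input you flag --- that $\R\varGamma_\FZ\CG^\bullet$ again lies in $\D_{\qct,\FZ}(\FX)$ --- is legitimate and is in fact already used (with the same citation \cite[2.1.(1)]{AJSB}) in the paper's proof of Proposition \ref{propb}, so there is no gap. The trade-off: the paper's proof is shorter and places the corollary in the general framework of Bousfield localization, while yours is self-contained, makes the role of the unit $\CG^\bullet\to\R\alpha_*\alpha^*\CG^\bullet$ and of the torsion functor explicit, and would survive without access to \cite{AJS}.
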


\begin{proof}
Combine \cite[Proposition 1.6, (ii) $\dimp$ (iii)]{AJS} with the previouos Proposition.
\end{proof}

\begin{thm} \label{one}
Let $(\FX, \CO_\FX)$ be a noetherian formal scheme. The category $\D_\qct(\FX)$ is generated by a perfect complex.
\end{thm}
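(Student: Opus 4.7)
The plan is to proceed by induction on the minimum number $n$ of affine opens needed to cover $\FX$; the base case $n=1$ is exactly Proposition~\ref{afin}. For the inductive step I would write $\FX = \FU \cup \FV$ with $\FV$ affine and $\FU$ admitting an $(n-1)$-element affine cover, and set $\FZ := \FX \setminus \FU$. The crucial geometric observation is that $\FZ$ is a closed subset \emph{engulfed by the single affine} $\FV$. By the inductive hypothesis, $\D_\qct(\FU)$ possesses a perfect generator $G_\FU$, and the strategy is to assemble a compact generator of $\D_\qct(\FX)$ out of a lift of $G_\FU$ together with a perfect generator of the localizing subcategory $\D_{\qct,\FZ}(\FX)$.

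The first substantive step produces a perfect $H \in \D_\qct(\FX)$ lying in and generating $\D_{\qct,\FZ}(\FX)$. On $\FV = \spf(B)$ with defining ideal $J$, choose $\ib \subset B$ with $\FZ = \spf(B/\ib)$, and let $K$ be the Koszul complex on a finite system of generators of $J+\ib$; by Lemma~\ref{lemaa} and Proposition~\ref{afin}, the sheafification $K^\tr$ is a perfect compact generator of $\D_{\qct,\FZ}(\FV)$. Because the homology of $K^\tr$ is supported on the closed subset $\FZ \subset \FX$, I would extend $K^\tr$ to an object $H \in \D_\qct(\FX)$ still supported on $\FZ$. Since $\FX$ is covered by the two opens $\FV$ and $\FX \setminus \FZ$, on which $H$ is respectively the Koszul complex $K^\tr$ (perfect) and acyclic (hence perfect), $H$ is perfect on all of $\FX$. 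Moreover, every object of $\D_{\qct,\FZ}(\FX)$ has support in $\FZ \subset \FV$ and is therefore determined by its restriction to $\FV$, so the generation of $\D_{\qct,\FZ}(\FV)$ by $K^\tr$ transfers to generation of $\D_{\qct,\FZ}(\FX)$ by $H$.

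Next, I would lift $G_\FU$ to a perfect object $\tilde G \in \D_\qct(\FX)$. Since $\D_{\qct,\FZ}(\FX)$ is a localizing subcategory of $\D_\qct(\FX)$ generated by the compact object $H$, Neeman's localization theorem applies to the equivalence of Proposition~\ref{propb}: every compact of $\D_\qct(\FX)/\D_{\qct,\FZ}(\FX) \cong \D_\qct(\FU)$ is a direct summand of $\alpha^*$ of a compact of $\D_\qct(\FX)$, where $\alpha \colon \FU \hookrightarrow \FX$. So $G_\FU$ is a direct summand of $\alpha^*\tilde G$ for some perfect $\tilde G$, and $\alpha^*\tilde G$ still generates $\D_\qct(\FU)$. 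To see that $E := \tilde G \oplus H$ generates $\D_\qct(\FX)$, take $M \in \D_\qct(\FX)$ with $\Hom(E[i],M) = 0$ for all $i$ and apply $\Hom(H[i],-)$ to the localization triangle $\R\varGamma_\FZ M \to M \to \R\alpha_*\alpha^*M \overset{+}{\lto}$; the right term lies in $\D_{\qct,\FZ}(\FX)^\perp$, so $\Hom(H[i],\R\varGamma_\FZ M) = 0$, and hence $\R\varGamma_\FZ M = 0$ as $H$ generates $\D_{\qct,\FZ}(\FX)$. Thus $M$ is $\D_{\qct,\FZ}(\FX)$-local, and the $\alpha^* \dashv \R\alpha_*$ adjunction gives $\Hom_\FU(\alpha^*\tilde G[i], \alpha^*M) \cong \Hom_\FX(\tilde G[i], M) = 0$, forcing $\alpha^*M = 0$ and so $M \in \D_{\qct,\FZ}(\FX) \cap \D_{\qct,\FZ}(\FX)^\perp = 0$.

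The main technical obstacle I anticipate is verifying in detail that the extension $H$ is both perfect in $\D_\qct(\FX)$ and a generator of $\D_{\qct,\FZ}(\FX)$: the Koszul complex on $\FV$ has terms (free modules) that are not themselves supported on $\FZ$, so the extension needs care, but the geometric fact that $\FZ$ is engulfed by $\FV$ --- yielding the two-open cover $\FX = \FV \cup (\FX\setminus\FZ)$ on which $H$ is locally Koszul or locally acyclic --- is precisely what rescues perfectness on the whole of $\FX$. Granted this, the Bousfield/Neeman lifting and the two-step vanishing argument are formal consequences of the machinery already assembled in the paper.
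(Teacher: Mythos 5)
Your proposal is correct in substance, but it follows a genuinely different route from the paper. The paper keeps the localization entirely inside the affine piece: it uses Proposition \ref{propb} for the pair $(\D_\qct(\FU),\D_{\qct,\FZ}(\FU))$ with $\FU$ affine (where compact generation is free, via the Koszul complex and Proposition \ref{afin}), lifts the restricted generator $\beta^*\CE^\bullet$ \emph{up to isomorphism} using the Bondal--van den Bergh trick with $\CE^\bullet\oplus\CE^\bullet[1]$ (which is why Theorem \ref{sksm} and \cite[Corollary 3.2.3]{bb} enter), and then glues the lift $\CF^\bullet$ with $\CE^\bullet\oplus\CE^\bullet[1]$ over the intersection by an explicit triangle; the generator is $\CP^\bullet\oplus\R\gamma_*\CQ^\bullet$. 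You instead localize on all of $\FX$ at $\D_{\qct,\FZ}(\FX)$, observe that this subcategory is generated by the single compact $H=\R j_*$ of the Koszul complex, and then need only the direct-summand part of the Thomason--Neeman theorem: a compact $\tilde G$ on $\FX$ with $G_\FU$ a summand of $\alpha^*\tilde G$ suffices, since a summand of a generator detects the same orthogonal. This eliminates the $K_0$-trick, the gluing triangle and the skeletal-smallness appendix, which is a real simplification. The trade-offs you should make explicit: (i) Neeman's theorem applied to the pair $(\D_\qct(\FX),\D_{\qct,\FZ}(\FX))$ requires the ambient category $\D_\qct(\FX)$ to be compactly generated; in your setting this is not yet known from the induction and must be imported from \cite[Proposition 6.14]{ahst} (the paper's route needs Neeman only over the affine open, where compact generation comes from Proposition \ref{afin}); (ii) the claim that $H$ generates $\D_{\qct,\FZ}(\FX)$, not merely $\D_{\qct,\FZ}(\FV)$, deserves its short proof: for $M\in\D_{\qct,\FZ}(\FX)$ the Mayer--Vietoris triangle for the cover $\{\FV,\FU\}$ gives $M\cong\R j_*(M|_\FV)$ because $M|_\FU=0$ and $M|_{\FU\cap\FV}=0$, and then the adjunction $j^*\dashv\R j_*$ together with $j^*\R j_*K\cong K$ yields $\Hom_{\FX}(H[i],M)\cong\Hom_{\FV}(K[i],M|_\FV)$, so vanishing forces $M|_\FV=0$ and hence $M=0$; similarly one should say explicitly that $H:=\R j_*$ of the Koszul complex, that $\R j_*$ preserves $\D_\qct$ by \cite[Proposition 5.2.6]{dfs}, and that $H$ lies in $\D_{\qct,\FZ}(\FX)$ and is perfect by checking on the cover $\{\FV,\FX\setminus\FZ\}$ via open base change, exactly as in the paper's verification for $\R\gamma_*\CQ^\bullet$. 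With those two points filled in, your argument is complete; the only thing the paper's more elaborate construction buys in addition is a generator whose restrictions to the two pieces of the cover are explicitly controlled, which the theorem itself does not require.
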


\begin{proof}
Let us argue by induction in the number of affine open formal subschemes needed to cover $\FX$. Denote this number by $n(\FX)$. Let $n := n(\FX)$, we write $\FX = \FU_1 \cup \dots \cup \FU_n$ with each $\FU_i$ an affine formal scheme for every $i \in \{1, \dots, n\}$. The case $n = 1$ is covered by Proposition \ref{afin}.

Denote $\FU = \FU_1$ and $\FY = \FU_2 \cup \dots \cup \FU_n$. We can (and will) suppose by induction hypothesis that $\D_\qct(\FY)$ is generated by a perfect complex, say $\CE^\bullet$. Let $\FV :=  \FU \cap \FY$ and consider the following cartesian diagram:
\[
 \begin{tikzpicture}
       \draw[white] (0cm,2.5cm) -- +(0: \linewidth)
      node (G) [black, pos = 0.4] {$\FV$}
      node (H) [black, pos = 0.6] {$\FU$};
      \draw[white] (0cm,0.5cm) -- +(0: \linewidth)
      node (E) [black, pos = 0.4] {$\FY$}
      node (F) [black, pos = 0.6] {$\FX$};
      \draw [->] (G) -- (H) node[above, midway, sloped, scale=0.75]{$\alpha$};
      \draw [->] (G) -- (E) node[left, midway, scale=0.75]{$\beta$};
      \draw [->] (H) -- (F) node[right, midway, scale=0.75]{$\gamma$};
      \draw [->] (E) -- (F) node[below, midway, scale=0.75]{$\delta$};
  \end{tikzpicture}
\]
where all maps are the canonical open embeddings. 

Put $\FZ := \FX \setminus \FY = \FU \setminus \FV$ with $\FU = \spf(A)$ for a certain noetherian $I$-adic ring $A$. The formal scheme $\FZ$ is a closed formal subscheme of $\FU$. Therefore $\FZ$ is affine and there exists an ideal $\ia$ of $A$ such that $\FZ = \spf(A/\ia)$. Note that $A/\ia$ is an $I(A/\ia)$-adic ring and its underlying topological space is identified with the set of zeros of the ideal $I+\ia$ in $\spec(A/I)$. Let $Q^\bullet$ be a Koszul complex built on a set of generators of $I+\ia$. According to \cite[Proposition 6.1]{BN} (see, alternatively, \cite[Proposition 6.1]{dg}) it generates $\D_{I+\ia}(A)$. By the equivalence of Lemma \ref{lemaa}, the perfect complex $\CQ^\bullet := \kappa^*\widetilde{Q^\bullet}$ generates $\D_{\qct, \FZ}(\FU)$ where $\kk \colon \spf(A) \lto \spec(A)$ denotes the completion map, as usual. We have that:
\begin{align}
\delta^*\R\gamma_*\CQ^\bullet &= \R\beta_*\alpha^*\CQ^\bullet = 0 
    &\text{ (by open base change) }\label{bcQ}\\
\gamma^*\R\gamma_*\CQ^\bullet &= \CQ^\bullet & \label{bcQ2}
\end{align}
But $\{\FY, \FU\}$ constitute a covering of $\FX$, therefore the previous equalities imply that the complex $\R\gamma_*\CQ^\bullet$ is perfect.
Summing up, the categories $\D_\qct(\FU)$ and $\D_{\qct, \FZ}(\FU)$ are compactly generated. 

Denote by $\T^\cc$ the subcategory of compact objects of a triangulated category $\T$. By Proposition \ref{propb} we have an equivalence $\frac{\D_\qct(\FU)}{\D_{\qct, \FZ}(\FU)} \cong \D_\qct(\FV)$. Applying Thomason-Neeman localization theorem \cite[Theorem 2.1.]{Ntty}  or \cite[Theorem 2.1.]{Ngd} we see that $\frac{\D_\qct(\FU)^\cc}{\D_{\qct, \FZ}(\FU)^\cc}$ is a full subcategory of  $\left(\frac{\D_\qct(\FU)}{\D_{\qct, \FZ}(\FU)}\right)^\cc \cong \D_\qct(\FV)^\cc$ and moreover the latter is the smallest thick subcategory of $\D_\qct(\FV)$ containing the former. Now $\beta^*\CE^\bullet$  is a perfect complex and by \cite[Proposition 6.12]{ahst} it is a compact object. By the aforementioned equivalence of triangulated categories it follows that 
\[
q\R\alpha_*\beta^*\CE^\bullet \in \left(\frac{\D_\qct(\FU)}{\D_{\qct, \FZ}(\FU)}\right)^\cc,
\]
where $q \colon \D_\qct(\FU) \to {\D_\qct(\FU)}/{\D_{\qct, \FZ}(\FU)}$ is the equivalence of Proposition \ref{propb} for $\alpha \colon \FV \inc \FU$. 
Since $\D_\qct(\FV)^\cc$ is skelletally small by Theorem \ref{sksm} below and having in mind \cite[Corollary 3.2.3]{bb}, there is an object $\CF^\bullet \in \D_\qct(\FU)^\cc$ such that 
\[q\CF^\bullet \cong q\R\alpha_*\beta^*(\CE^\bullet\oplus\CE^\bullet[1]) 
\in
\frac{\D_\qct(\FU)^\cc}{\D_{\qct, \FZ}(\FU)^\cc}.\] 
Thus
$\overline{\alpha}q\CF^\bullet \cong \overline{\alpha}q\R\alpha_*\beta^*(\CE^\bullet\oplus\CE^\bullet[1])$, but $\alpha^* = \overline{\alpha}q$ and it follows that
we have an isomorphism $\alpha^*\CF^\bullet \iso \beta^*(\CE^\bullet\oplus\CE^\bullet[1])$ in $\D_\qct(\FV)$. Again by \cite[Proposition 6.12]{ahst} the complex $\CF^\bullet$ is perfect because it is compact.
We are going to \emph{glue} the complexes $\CF^\bullet$ in $\D_\qct(\FU)$ and $\CE^\bullet\oplus\CE^\bullet[1]$ in $\D_\qct(\FY)$ to get a perfect complex $\CP^\bullet$ in $\D_\qct(\FX)$. We do it by taking the third object in the following 
distinguished triangle on $\FX$:
\[
\CP^\bullet \lto
\R\gamma_*\CF^\bullet \oplus \R\delta_*(\CE^\bullet\oplus\CE^\bullet[1]) \lto
\R(\delta\beta)_*\beta^*(\CE^\bullet\oplus\CE^\bullet[1]) \overset{+}{\lto}
\]
where the middle map is the difference of the obvious morphisms.
Note that $\gamma^* \CP^\bullet \cong \CF^\bullet$ and $\delta^* \CP^\bullet \cong \CE^\bullet\oplus\CE^\bullet[1]$, therefore $\CP^\bullet$ is perfect.

To finish the proof define  $\CC^\bullet := \CP^\bullet \oplus \R\gamma_*\CQ^\bullet$. We claim that $\CC^\bullet$ is a generator of $\D_\qct(\FX)$. Let $\CG^\bullet \in \D_\qct(\FX)$ and suppose that $\Hom_{\D_\qct(\FX)}(\CC^\bullet[i], \CG^\bullet) = 0$ for all $i \in \ZZ$, let us see that necessarily $\CG^\bullet = 0$.
Note that
\[
\Hom_{\D_\qct(\FX)}(\CC^\bullet[i], \CG^\bullet) =
\Hom_{\D_\qct(\FX)}(\CP^\bullet[i], \CG^\bullet) \oplus
\Hom_{\D_\qct(\FX)}(\R\gamma_*\CQ^\bullet[i], \CG^\bullet).
\]

We use first that $\Hom_{\D_\qct(\FX)}(\R\gamma_*\CQ^\bullet[i], \CG^\bullet) = 0$ for all $i \in \ZZ$. Using the Mayer-Vietoris triangle
\[
\CG^\bullet \lto
\R\gamma_*\gamma^*\CG^\bullet \oplus \R\delta_*\delta^*\CG^\bullet \lto
\R(\gamma\alpha)_*(\gamma\alpha)^*\CG \overset{+}{\lto}
\]
we obtain isomorphisms
\begin{equation}
\begin{split}
\Hom_{\D_\qct(\FX)}(\R\gamma_*\CQ^\bullet[i], \R&\gamma_*\gamma^*\CG^\bullet) \oplus \Hom_{\D_\qct(\FX)}(\R\gamma_*\CQ^\bullet[i], \R\delta_*\delta^*\CG^\bullet) \liso \\
&\Hom_{\D_\qct(\FX)}(\R\gamma_*\CQ^\bullet[i], \R(\gamma\alpha)_*(\gamma\alpha)^*\CG)\\
\end{split}\label{inter}
\end{equation}
Note that
\[
\Hom_{\D_\qct(\FX)}(\R\gamma_*\CQ^\bullet[i], \R\delta_*\delta^*\CG^\bullet) \cong 
\Hom_{\D_\qct(\FY)}(\delta^*\R\gamma_*\CQ^\bullet[i], \delta^*\CG^\bullet) \]
therefore $\Hom_{\D_\qct(\FX)}(\R\gamma_*\CQ^\bullet[i], \R\delta_*\delta^*\CG^\bullet) = 0$ by \eqref{bcQ}. So, using isomorphisms \eqref{inter} we get
\[
\Hom_{\D_\qct(\FX)}(\R\gamma_*\CQ^\bullet[i], \R\gamma_*\gamma^*\CG^\bullet)  \cong 
\Hom_{\D_\qct(\FX)}(\R\gamma_*\CQ^\bullet[i], \R\gamma_*\R\alpha_*(\gamma\alpha)^*\CG),
\]
equivalently
\[
\Hom_{\D_\qct(\FU)}(\CQ^\bullet[i], \gamma^*\CG^\bullet)  \cong 
\Hom_{\D_\qct(\FV)}(\alpha^*\CQ^\bullet[i], (\gamma\alpha)^*\CG).
\]
Hence, $\Hom_{\D_\qct(\FU)}(\CQ^\bullet[i], \gamma^*\CG^\bullet) = 0$, for all $i \in \ZZ$. By Corollary \ref{bous}, the canonical map $\gamma^*\CG^\bullet \to \R\alpha_*(\gamma\alpha)^*\CG^\bullet$ is an isomorphism.
So 
\[
\R\gamma_*\gamma^*\CG^\bullet \cong \R\gamma_*\R\alpha_*(\gamma\alpha)^*\CG^\bullet \cong \R(\delta\beta)_*(\delta\beta)^*\CG^\bullet,
\] 
the last isomorphism by pseudo-functoriality. Using the previous Mayer-Vietoris triangle
\[
\CG^\bullet \lto
\R\gamma_*\gamma^*\CG^\bullet \oplus \R\delta_*\delta^*\CG^\bullet \lto
\R(\delta\beta)_*(\delta\beta)^*\CG^\bullet \overset{+}{\lto}
\]
it follows that 
\begin{equation}\label{Gdel}
\CG^\bullet \cong \R\delta_*\delta^*\CG^\bullet.
\end{equation}

Now we are going to use that $\Hom_{\D_\qct(\FX)}(\CP^\bullet[i], \CG^\bullet) = 0$ for all $i \in \ZZ$. We have the chain of isomorphisms
\begin{align*}
\Hom_{\D_\qct(\FX)}(\CP^\bullet[i], \CG^\bullet) &\cong 
\Hom_{\D_\qct(\FX)}(\CP^\bullet[i], \R\delta_*\delta^*\CG^\bullet) \tag{by (\ref{Gdel})}\\
&\cong \Hom_{\D_\qct(\FY)}(\delta^*\CP^\bullet[i], \delta^*\CG^\bullet)
\end{align*}
But $\delta^*\CP^\bullet = \CE^\bullet\oplus\CE^\bullet[1]$, therefore $\Hom_{\D_\qct(\FY)}(\CE^\bullet[i], \delta^*\CG^\bullet) = 0$ for all $i \in \ZZ$. Being $\CE^\bullet$ a generator of $\D_\qct(\FY)$, it follows that
$
\delta^*\CG^\bullet = 0.
$
As a consequence,
\[
\CG^\bullet \overset{\eqref{Gdel}}\cong 
\R\delta_*\delta^*\CG^\bullet = 0.
\]
and the proof is complete.
\end{proof}

A locally noetherian formal scheme $\FX$ is called \emph{semi-separated} if the diagonal map $\Delta_f \colon \FX \to \FX \times_{\spec{\ZZ}} \FX$ is an affine morphism where $f \colon \FX \to \spec{\ZZ}$ denotes the canonical map.
A useful criterion for a morphism of noetherian formal schemes to be affine is given by \cite[Proposition (10.16.2)]{ega1}.

\begin{cor}
Let $\FX$ be a noetherian semi-separated formal scheme. The category $\D_\qct(\FX)$ is a \emph{monogenic} algebraic stable homotopy category in the sense of \cite{hps}. 
\end{cor}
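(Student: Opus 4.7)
My plan is to deduce this corollary by combining two ingredients, one from the literature and one from the present paper. The first ingredient is the main result of \cite{ahst}: for $\FX$ semi-separated and noetherian, the triangulated category $\D_\qct(\FX)$ is shown there to carry the structure of an algebraic stable homotopy category in the sense of \cite{hps}, that is, a closed symmetric monoidal triangulated category with small coproducts, compatible $t$-structure, and a set of compact (perfect) strongly dualizable generators. The semi-separatedness hypothesis enters precisely here: by the criterion recalled before the statement, together with \cite[Proposition (10.16.2)]{ega1}, the diagonal $\Delta_f$ is affine, which supplies the flat base-change and projection formulas required to make the tensor structure on $\D_\qct(\FX)$ well-behaved.

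The second ingredient is Theorem \ref{one} itself, which produces, for any noetherian formal scheme, a single compact generator of $\D_\qct(\FX)$, realised by a perfect complex.

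With these two ingredients, the argument is immediate: by definition in \cite{hps}, an algebraic stable homotopy category is called \emph{monogenic} when it possesses a single compact generator (equivalently, when the tensor unit, possibly after replacement, is a compact generator). The first ingredient gives the stable homotopy category structure under the semi-separatedness hypothesis; the second supplies the single compact generator. Hence $\D_\qct(\FX)$ is a monogenic algebraic stable homotopy category.

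The only routine check is to reconcile the notion of generator used in Theorem \ref{one} (the $\Hom$-vanishing criterion of \ref{genadd}) with the HPS notion (generation of the ambient category as a localizing subcategory); but for compact objects in a triangulated category with coproducts these two formulations coincide, by \cite[Lemma 3.2]{Ngd}, as already invoked in \ref{genadd}. No further work is required, so I do not anticipate any serious obstacle in writing this argument out.
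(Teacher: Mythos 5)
Your proposal is correct and follows essentially the same route as the paper: cite the result of \cite{ahst} (there, Corollary 6.19) giving the algebraic stable homotopy category structure under semi-separatedness, and invoke Theorem \ref{one} for the single compact generator, with the identification of the two notions of generation handled by \cite[Lemma 3.2]{Ngd} as in \ref{genadd}.
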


\begin{proof}
The fact that $\D_\qct(\FX)$ is an algebraic stable homotopy category is \cite[Corollary 6.19.]{ahst} and it is monogenic by the previous Theorem.
\end{proof}

\begin{rem}
This answers the question posed in \lc page 1251 (just before the references).
\end{rem}

\section{Appendix: compact objects form a skeletally small category}

Recall that a category $\SC$ is called \emph{skeletally small} or \emph{essentially small} if there is a set $S$ of objects of $\SC$ such that every object of $\SC$ is isomorphic to an object of $S$.

Let $(\FX, \CO_\FX)$ be a noetherian formal scheme. Denote by $\D_\qct(\FX)^\cc$ the full subcategory of $\D_\qct(\FX)$ formed by compact objects. As we remarked before it agrees with the full subcategory of $\D_\qct(\FX)$ formed by perfect complexes \cite[Proposition 6.12.]{ahst}.

\begin{prop} \label{afin-ss}
Let $\FX = \spf(A)$ be an affine formal scheme such that $A$ is an $I$-adic noetherian ring. The category $\D_\qct(\FX)^\cc$ is skeletally small.
\end{prop}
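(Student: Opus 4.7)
The plan is to transfer the problem to commutative algebra via the equivalence $\phi \colon \D_\qct(\FX) \liso \D_{I}(A)$ recalled just before Lemma~\ref{lemaa}. Being a triangulated equivalence between categories with coproducts, $\phi$ sends compact objects to compact objects, so it is enough to prove that $\D_{I}(A)^\cc$ is skeletally small. The idea is to trap this class inside the category of perfect complexes $\D(A)^\cc$, which is manifestly skeletally small by elementary module-theoretic considerations.

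First I would invoke (the proof of) Proposition~\ref{afin} to produce a Koszul complex $K^\bullet := K(f_1,\dots,f_r; A)$ on a finite system of generators of $I$; this is a compact generator of $\D_{I}(A)$, and as a bounded complex of finitely generated free $A$-modules it also lies in $\D(A)^\cc$. By the Thomason-Neeman theorem (\cite[Theorem~2.1]{Ngd}) one has $\D_{I}(A)^\cc = \thik_{\D_{I}(A)}(K^\bullet)$. Since $\D_{I}(A)$ is a full triangulated subcategory of $\D(A)$, cones, shifts, finite direct sums and retracts computed there coincide with those computed in $\D(A)$; combined with the fact that $\D(A)^\cc$ is itself a thick subcategory of $\D(A)$ containing $K^\bullet$, this yields $\D_{I}(A)^\cc = \thik_{\D_{I}(A)}(K^\bullet) \subseteq \thik_{\D(A)}(K^\bullet) \subseteq \D(A)^\cc$.

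It then remains to check that $\D(A)^\cc$ has a small skeleton. Every perfect complex is quasi-isomorphic to a bounded complex of finitely generated projective $A$-modules; each such projective is a direct summand of some $A^n$, classified by an idempotent in the ring $M_n(A)$, so there are only set-many isomorphism classes. Once the components are fixed within a bounded range, the differentials live in $\Hom$-groups and hence form a set, so the bounded complexes of finitely generated projective $A$-modules are parameterized by a set; passing to quasi-isomorphism classes only identifies further objects. This gives skeletal smallness of $\D(A)^\cc$, and therefore of $\D_{I}(A)^\cc \simeq \D_\qct(\FX)^\cc$.

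The delicate point I anticipate is verifying the inclusion $\D_{I}(A)^\cc \subseteq \D(A)^\cc$ at the level of compact objects: it is \emph{not} true in general that the inclusion of a compactly generated triangulated subcategory preserves compactness, but here the argument is saved by the fact that a chosen compact generator $K^\bullet$ of $\D_{I}(A)$ happens to be already perfect over $A$, so Thomason-Neeman transports the full $\cc$-category into $\D(A)^\cc$. Everything else in the proof is routine cardinality bookkeeping.
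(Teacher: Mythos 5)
Your argument is correct, but it justifies the key step by a different lemma than the paper does. Both proofs share the same skeleton: transfer the question through the equivalence $\D_\qct(\FX)^\cc \liso \D_I(A)^\cc$, show that every compact object of $\D_I(A)$ is a perfect complex of $A$-modules (i.e.\ quasi-isomorphic to a bounded complex of finitely generated projectives), and finish with the cardinality bookkeeping on such complexes, which is identical in the two texts. Where you diverge is in the middle step: the paper simply cites Rickard's criterion (via \cite[Lemma 4.3]{AJST}) to identify the compacts of $\D_I(A)$ with bounded complexes of finite type projectives, whereas you deduce the inclusion $\D_I(A)^\cc \subseteq \D(A)^\cc$ from the Thomason--Neeman theorem: the Koszul complex $K^\bullet$ on generators of $I$ is a compact generator of $\D_I(A)$ which is already perfect over $A$, so $\D_I(A)^\cc = \thik_{\D_I(A)}(K^\bullet) \subseteq \thik_{\D(A)}(K^\bullet) \subseteq \D(A)^\cc$; the subtleties you flag (retracts and cones in the full subcategory $\D_I(A)$ agree with those in $\D(A)$, since $\D_I(A)$ is closed under direct summands there, and compactness does not in general pass to subcategories) are real and are handled correctly, as is the closing remark that any perfect representative of an object of $\D_I(A)$ automatically has $I$-torsion homology, so the chosen set of representatives does lie in $\D_I(A)^\cc$. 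What each route buys: the paper's citation is shorter and gives the explicit shape of the compacts in one stroke; your route is more self-contained relative to the tools already deployed elsewhere in the paper (the Koszul generator of Proposition \ref{afin} and the Thomason--Neeman theorem used in Theorem \ref{one}), avoiding the external appeal to Rickard's criterion at the cost of the extra thick-subcategory bookkeeping, and it creates no circularity since neither ingredient depends on Theorem \ref{sksm}.
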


\begin{proof}
Perfect objects in this case are quasi-isomorphic to bounded complexes of finite type projective modules by the equivalence
\[
\D_\qct(\FX)^\cc \liso \D_{I}(A)^\cc
\]
induced by the equivalence recalled in the proof of lemma \ref{afin}. This assertion follows from Rickard's criterion, see \cite[Lemma 4.3.]{AJST}.

Now it is clear that finite type projective modules form a skeletally small subcategory of $A\md$ because a projective module is isomorphic to a direct summand of a free module. And the set of bounded complexes of these modules clearly can be represented by a set.
\end{proof}
 
\begin{thm} \label{sksm}
Let $(\FX, \CO_\FX)$ be a noetherian formal scheme. The category $\D_\qct(\FX)^\cc$ is skeletally small.
\end{thm}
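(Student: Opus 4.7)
The approach is induction on $n(\FX)$, the minimum number of affine open formal subschemes needed to cover the (quasi-compact) formal scheme $\FX$; the base case $n(\FX) = 1$ is Proposition \ref{afin-ss}. For the inductive step, decompose $\FX = \FU \cup \FY$ with $\FU$ affine and $\FY$ a union of $n - 1$ affine opens, set $\FV := \FU \cap \FY$, and retain the notation $\alpha, \beta, \gamma, \delta$ for the four open immersions of the cartesian square used in the proof of Theorem \ref{one}. The induction hypothesis together with Proposition \ref{afin-ss} yields that both $\D_\qct(\FU)^\cc$ and $\D_\qct(\FY)^\cc$ are skeletally small.

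The plan is then to parametrize every $\CP^\bullet \in \D_\qct(\FX)^\cc$ by set-many data via a Mayer-Vietoris gluing argument. To this end, apply to $\CP^\bullet$ the distinguished triangle
\[
\CP^\bullet \lto \R\gamma_*\gamma^*\CP^\bullet \oplus \R\delta_*\delta^*\CP^\bullet \xto{\mu_{\CP}} \R(\gamma\alpha)_*(\gamma\alpha)^*\CP^\bullet \xto{+}
\]
(the same one used in the proof of Theorem \ref{one}), so that $\CP^\bullet$ is recovered, up to non-canonical isomorphism, as the fiber of $\mu_{\CP}$. The crucial point is that $\mu_{\CP}$ is canonically the difference of the two adjunction units, and is thus completely reconstructible from the pullbacks $\CF := \gamma^*\CP^\bullet \in \D_\qct(\FU)^\cc$ and $\CE := \delta^*\CP^\bullet \in \D_\qct(\FY)^\cc$, together with the open base change isomorphism $\phi \colon \alpha^*\CF \iso \beta^*\CE$ in $\D_\qct(\FV)$. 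In particular, the isomorphism class of $\CP^\bullet$ is determined by the isomorphism class of the triple $(\CF, \CE, \phi)$.

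To count these triples, observe that by the inductive hypothesis and Proposition \ref{afin-ss} there are set-many isomorphism classes of such $\CF$ and $\CE$ (note that restriction of a perfect complex to an open subscheme is perfect, hence compact); moreover, for any chosen pair of representatives, the isomorphisms $\phi$ form a subset of $\Hom_{\D_\qct(\FV)}(\alpha^*\CF, \beta^*\CE)$, which is a set since $\D_\qct(\FV)$ is a full subcategory of the locally small derived category $\D(\A(\FV))$. Therefore the triples $(\CF, \CE, \phi)$ form a set, and so do the isomorphism classes of their fibers.

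I expect the main obstacle to be the functoriality claim: that $\mu_{\CP}$ is canonically reconstructible from $(\CF, \CE, \phi)$, so that two compact objects yielding isomorphic triples have isomorphic fibers. This reduces to the naturality of the open base change isomorphism and of the adjunction units $\id \to \R\alpha_*\alpha^*$ and analogues, ingredients already implicit in the proof of Theorem \ref{one}; thus no essentially new input is required beyond a careful bookkeeping of these naturalities.
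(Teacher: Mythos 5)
Your argument is correct, but it organizes the induction differently from the paper, and the difference is worth noting. The paper's proof also rests on the Mayer--Vietoris triangle and on producing each compact object as (isomorphic to) a cone of a map built from chosen representatives; however, in its gluing step it chooses a representative on the intersection as well, so it needs $\D_\qct(\FU_1\cap\FU_2)^\cc$ to be skeletally small. That forces a two-tier induction: first the separated case (where intersections of affine opens are again affine, so the overlap is covered by $n-1$ affines), then the general case by a second induction on coverings by separated opens. You avoid this entirely by encoding the overlap data not as a third representative but as a gluing isomorphism $\phi\colon\alpha^*\CF\iso\beta^*\CE$, which only requires that $\Hom_{\D_\qct(\FV)}(\alpha^*\CF,\beta^*\CE)$ be a set --- i.e.\ local smallness of $\D(\A(\FV))$, available via K-injective resolutions as in [AJS1] --- rather than skeletal smallness of $\D_\qct(\FV)^\cc$. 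Hence a single induction on the number of affine opens covers an arbitrary noetherian formal scheme, with no separatedness detour. The step you flag as the main obstacle is indeed the only point needing care, and it does go through: the Mayer--Vietoris map for $\CP^\bullet$ is recovered from the triple $(\gamma^*\CP^\bullet,\delta^*\CP^\bullet,\mathrm{can})$ using the units $\id\to\R\alpha_*\alpha^*$, $\id\to\R\beta_*\beta^*$ and the identification $\gamma\alpha=\delta\beta$, and if two triples are isomorphic the resulting square commutes by naturality of the units together with the compatibility condition on $\phi$, so TR3 (two isomorphisms in a map of triangles force the third) gives isomorphic fibers; note also that restrictions of compact objects are perfect, hence compact, by [AJPV, Proposition 6.12], which is what places $\gamma^*\CP^\bullet$ and $\delta^*\CP^\bullet$ in the skeletally small categories supplied by Proposition \ref{afin-ss} and the induction hypothesis. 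In short: the paper's route pays with the extra Steps 2 and 3 but keeps each step very explicit; yours buys a shorter induction at the cost of spelling out the reconstruction-and-naturality bookkeeping, and there is no circularity since neither argument uses Theorem \ref{one}.
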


\begin{proof}
We divide the proof in three steps, the first one will provide the basic tool that makes it possible to apply induction in later steps.

\textbf{Step 1}. Suppose that $\FX = \FU_1 \cup \FU_2$ where $\FU_1$ and $\FU_2$ are open formal subschemes of $\FX$. Assume that $\D_\qct(\FU_1)^\cc$, $\D_\qct(\FU_2)^\cc$ and $\D_\qct(\FU_1 \cap \FU_2)^\cc$ is skeletally small, we will see that $\D_\qct(\FX)^\cc$ is skeletally small.

Let us represent $\FX$ together with its open subsets and the corresponding canonical open embeddings by the diagram
\[
 \begin{tikzpicture}
       \draw[white] (0cm,2.5cm) -- +(0: \linewidth)
      node (G) [black, pos = 0.4] {$\FU_1 \cap \FU_2$}
      node (H) [black, pos = 0.6] {$\FU_1$};
      \draw[white] (0cm,0.5cm) -- +(0: \linewidth)
      node (E) [black, pos = 0.4] {$\FU_2$}
      node (F) [black, pos = 0.6] {$\FX$};
      \draw [->] (G) -- (H) node[above, midway, sloped, scale=0.75]{$k'$};
      \draw [->] (G) -- (E) node[left, midway, scale=0.75]{$j'$};
      \draw [->] (H) -- (F) node[right, midway, scale=0.75]{$j$};
      \draw [->] (E) -- (F) node[below, midway, scale=0.75]{$k$};
      \draw [->] (G) -- (F) node[auto, midway, scale=0.75]{$\ell$};
  \end{tikzpicture}
\]
Let $S_1$, $S_2$, $S_{12}$ be the sets that represent the classes of isomorphy of objets in $\D_\qct(\FU_1)^\cc$, $\D_\qct(\FU_2)^\cc$ and $\D_\qct(\FU_1 \cap \FU_2)^\cc$, respectively. Let $\CE^\bullet \in \D_\qct(\FX)^\cc$. There are complexes $\CF_1^\bullet \in S_1$, $\CF_2^\bullet \in S_2$, $\CF_{12}^\bullet \in S_{12}$ and isomorphisms
\[
\phi_1    \colon j^*\CE^\bullet    \liso \CF_1^\bullet,    \quad
\phi_2    \colon k^*\CE^\bullet    \liso \CF_2^\bullet,    \quad
\phi_{12} \colon \ell^*\CE^\bullet \liso \CF_{12}^\bullet. \quad
\] 
Note that we have a commutative triangle
\[
 \begin{tikzpicture}
       \draw[white] (0cm,2.25cm) -- +(0: \linewidth)
      node (G) [black, pos = 0.3] {${k'}^*\CF_1^\bullet$}
      node (H) [black, pos = 0.7] {${j'}^*\CF_2^\bullet$};
      \draw[white] (0cm,0.5cm) -- +(0: \linewidth)
      node (E) [black, pos = 0.5] {$\CF_{12}^\bullet$};
      \draw [->] (G) -- (H) node[above, midway, sloped, scale=0.75]{$\text{ via } \phi_2 \phi_1^{-1}$};
      \draw [->] (G) -- (E) node[auto, swap, midway, scale=0.75]{$\text{ via } \phi_{12} \phi_1^{-1}$};
      \draw [->] (H) -- (E) node[auto, midway, scale=0.75]{$\text{ via } \phi_{12} \phi_2^{-1}$};
  \end{tikzpicture}
\]
By adjunction and pseudo-functoriality, from the morphisms $\phi_1 \colon j^*\CE^\bullet \iso \CF_1^\bullet$, $\phi_2 \colon k^*\CE^\bullet \iso \CF_2^\bullet$ and $\phi_{12} \colon \ell^*\CE^\bullet \liso \CF_{12}^\bullet$,
we define
\[
\psi_1    \colon \R{}j_*\CF_1^\bullet  \lto \R\ell_*\CF_{12}^\bullet, \qquad
\psi_2    \colon \R{}k_*\CF_2^\bullet  \lto \R\ell_*\CF_{12}^\bullet.
\] 
So we get a map $\Psi \colon \R{}j_*\CF_1^\bullet \oplus \R{}k_*\CF_2^\bullet \lto \R\ell_*\CF_{12}^\bullet$, namely $\Psi := \binom{\psi_1}{-\psi_2}$. Putting all together, we construct a map of triangles
\[
 \begin{tikzpicture}
       \draw[white] (0cm,2.25cm) -- +(0: \linewidth)
      node (G) [black, pos = 0.1] {$\CG^\bullet$}
      node (H) [black, pos = 0.4] {$\R{}j_*\CF_1^\bullet \oplus \R{}k_*\CF_2^\bullet$}
      node (I) [black, pos = 0.75] {$\R\ell_*\CF_{12}^\bullet$}
      node (I+) [black, pos = 0.9] {};
      \draw[white] (0cm,0.5cm) -- +(0: \linewidth)
      node (E) [black, pos = 0.1] {$\CE^\bullet$}
      node (F) [black, pos = 0.4] {$\R{}j_*(j^*\CE^\bullet)\oplus\R{}k_*(k^*\CE^\bullet)$}
      node (S) [black, pos = 0.75] {$\R{}\ell_*(\ell^*\CE^\bullet)$}
      node (S+) [black, pos = 0.9] {};
      \draw [->] (G) -- (H) node[above, midway, sloped, scale=0.75]{};
      \draw [->] (H) -- (I) node[above, midway, sloped, scale=0.75]{$\Psi$};
      \draw [->] (I) -- (I+) node[above, midway, sloped, scale=0.75]{$+$};
      \draw [<-] (G) -- (E) node[left, midway, scale=0.75]{$\mu$};
      \draw [<-] (H) -- (F) node[left, midway, scale=0.75]{$\R{}j_*\phi_1\oplus\R{}k_*\phi_2$};
      \draw [->] (S) -- (I) node[left, midway, scale=0.75]{$\R{}\ell_*\phi_{12}$};
      \draw [->] (E) -- (F) node[below, midway, scale=0.75]{};
      \draw [->] (F) -- (S) node[above, midway, scale=0.75]{};
      \draw [->] (S) -- (S+) node[above, midway, scale=0.75]{$+$};
  \end{tikzpicture}
\]
The bottom row is the Mayer-Vietoris triangle of $\CE^\bullet$ with respect to the covering given by $\FU_1$ and $\FU_2$. The top row is the completion to a triangle of the map $\Psi$ and the right square is commutative by the definition of the maps. It follows that the map $\mu$ is an isomorphism in $\D_\qct(\FX)$. Note that $\CG^\bullet$ belongs to the set $S$ of those complexes that are cones of a map in which the source complex is a coproduct of an extension to $\FX$ of a complex in $S_1$ and a complex in $S_2$, and the target complex is an extension to $\FX$ of a complex in $S_{12}$. Therefore all perfect complexes in $\D_\qct(\FX)$ are isomorphic to a complex in $S$ as claimed.

\textbf{Step 2}. Suppose first that $\FX$ is separated and let us argue by induction on the number of affine open formal subschemes needed to cover $\FX$. Denote this number by $n$, therefore $\FX = \FU_1 \cup \dots \cup \FU_n$ with each $\FU_i$ an affine formal scheme for every $i \in \{1, \dots, n\}$. Let $\FU = \FU_1$ and $\FY = \FU_2 \cup \dots \cup \FU_n$. Note that
\[
\FS := \FU \cap \FY = \FU \cap (\FU_2 \cup \dots \cup \FU_n) =
(\FU \cap \FU_2) \cup \dots \cup (\FU \cap \FU_n).
\] 
But, being $\FX$ separated, the open subschemes $(\FU \cap \FU_2), \dots, (\FU \cap \FU_n)$ are affine therefore $\FS$ can be covered by $n - 1$ affine open subschemes. Now $\FU$ satisfies the thesis of the theorem by Lemma \ref{afin-ss} and both $\FY$ and $\FS$ by induction hypothesis. By Step 1, the result follows for $\FX$.

\textbf{Step 3}.
Finally we allow $\FX$ to be any noetherian formal scheme. Assume that $\FX = \FU_1 \cup \dots \cup \FU_n$ where each $\FU_i$ is a separated noetherian formal scheme. We will use induction in the number of \emph{separated} open formal sub\-schemes needed to cover $\FX$. With the previous notation $\FU = \FU_1$ satisfies the thesis of our theorem by Step 2. Both $\FY$ and $\FS$ can be covered by $n - 1$ separated open formal subschemes because the intersection of open separated subschemes is a separated formal scheme. Therefore, they also satisfy the thesis of our theorem by the induction hypothesis. Using Step 1 again the proof is complete.
\end{proof}

\begin{rem}
The issue of the skeletal smallness of the category of finite type locally free sheaves on any locally ringed space was dealt with in \cite[(\textbf{0}.5.6.1)]{ega1}. The case of perfect complexes on a quasi-compact, quasi-sepa\-ra\-ted scheme was mentioned in \cite[Appendix F]{tt}. A proof in this case along the lines of the previous Theorem may be given simplifying somehow the strategy of \lc (3.20.4) -- (3.20.6). We leave the details to the interested reader.
\end{rem}


\end{document}